\newtheorem{example}{Example}
\newtheorem{theorem}{Theorem}
\begin{document}

	\title{Modular chaos   for random processes}	
	
	\author[]{Marat Akhmet\thanks{Corresponding Author, Tel.: +90 312 210 5355, Fax: +90 312 210 2972, E-mail: marat@metu.edu.tr} }

	\affil[]{Department of Mathematics, Middle East Technical University, 06800 Ankara, Turkey}

	\date{}
	\maketitle

	\noindent \textbf{Abstract.}  
		In the present paper, an essential  generalization of the  symbolic dynamics   is considered.  We apply  the  notions of abstract self-similar sets and the  similarity map  for  a   chaos introduction,  which orbits are expanded among  infinitely  many  modules.  The dynamics is  free of dimensional,  metrical  and topological  assumptions.   It    unites  all the three types of Poincar\'{e}, Li-Yorke and Devaney  chaos  in a single model, which  can  be unbounded.  The   research  demonstrates  that  the dynamics   of Poincar\'{e} chaos is of exceptional  use   to analyze discrete and continuous-time random processes.   Examples,  illustrating the  results  are provided.

	\section{Introduction} 
	\label{sec1}
	
	The paradigm  of chaos relies  on the two  distance antagonists,   proximality   and separation.    They  utilize  metrical  or topological  closeness and    divergence in the description of chaos considering  either couples of  orbits \cite{Devaney,Yorke} or a single motion \cite{AkhmetPoincare}.    In the present research,    we  have  renounced the idea that transitivity  and density of  a class of motions have  to be described metrically, and   even topologically.   
	 Our  suggestions are  based on the  new idea that  closeness is  considered  not   in the  metrical  sense,  but    through \textit{ indexing},  although   separation still rely  on the distance.  That  is,  the antagonists still  are  present,  but  not   in the  way of  conventional theories.      The process of indexing    is adjacent   with  the similarity  map.  	More precisely,  the indexing  is  an  instrument  how the  mapping is to be chaotic.
	   Moreover,  trajectories  move   through   different sets,  and even various metric spaces,   when  the  time increases,    and  each  of the  sets  admits its own chaotic structure.  This is  why,   we say  that  the \textit{modular chaos}  is in the  focus of the  paper or more  precisely \textit{modular domain-structured chaos}. 
	 
	 	In  the paper\cite{AkhmetDiscreteRandom} the  notion  of domain-structured chaos is  presented in most abstract  form,    free of topological  properties for  domains.  This  time,  we intend  to  consider the  chaos developed to  the  level, when  two  points are  close, if they  belong to  the  same set by  \textit{indexing}.

	 	 It is   meaningful  that  the approach helps us to  find  a chaotic structure  for  continuous-time random processes, bounded    as well as  unbounded.  We accept  the phenomenon as chaos presence in the dynamics,  since  realizations of the corresponding  discrete  time random process with  fixed discrete  sequence of time  are the same as trajectories of  the  deterministic  modular  similarity  map introduced in the present  research.    One must say  that the concepts  of Poincar\'{e} chaos  and unpredictable sequences are of strong  importance  for the  investigation.  
	 	 
	 	 We hope that  the concept   will be  developed further,  by  considering multidimensional indexing and infinitely  dimensional dynamics.    Additionally,   the  definitions can  be   considered for  processes,  which  are stochastic only  partially, in  time,  that  is hybrid dynamics, through combinations of intervals,  where  a dynamics is deterministic and/or  stochastic.  The suggestions can be considered  for construction of hierarchy of chaos such  that  each next  level  of the  complex dynamics can  be determined as union of previous  ones.

	     In the  basis of our  investigation are  the  most  effective  methods, which have been  developed    for   chaos  investigation   through  \textit{similarity,  self-similaruty}   and \textit{symbolic dynamics}. 
		
		 The  research  on similarity goes back to  G. Leibniz, who introduced the notions of recursive self-similarity \cite{Zmeskal}.  The idea of a self-similar set was first considered by Moran,\cite{Moran}.  He  gave a mathematical definition of a geometric construction as a collection of sets satisfying specific conditions.  Further,   definitions of self-similarity and related problems were  discussed in papers \cite{Jorgensen, Stella,PesinWeiss,Hutchinson,Hata,Edgar,Spear,Bandt,Falconer1,Lau,Ngai}. 
		  Our research is  based on  a point-set structure in metric spaces and this is  why we call it \textit{abstract self-similarity}\cite{AkhmetSimilarity}. It  does not rely on any special functions, and the \textit{similarity map}  is naturally  combined  with  the  concept.  The map can be considered as a generalization of the Bernoulli shift on symbolic spaces \cite{Wiggins88}.   We suppose that next extension of the  present results  can be  obtained  on the basis of papers   \cite{Hutchinson,Hata,Edgar,Spear,Bandt,Falconer1,Lau,Ngai}.

		  Potential   applications of the  method  naturally   relate  to ways  of  chaos research such as chaos control and synchronization \cite{Gonzales}.   The  theory  of   stochastic differential  equations   \cite{Oksendal} can  be enriched,     if modular  chaos investigation  results  are involved in  the  study  by utilizing  methods of replication of chaos and unpredictable dynamics \cite{AkhmetBook,AkhmetReplication,AkhmetPoincare}. 
		
		\section{Modular domain-structured chaos} 
		
  	Let    a  countable collection of metric spaces $(F_j,d_j),    j=1,2,\ldots,$   be given,   with    distances $d_j.$   Assume   that  for    each  set $F_j$   the    following presentation exists,  
			\begin{equation} \label{AbstFracSet}
		  \mathcal{F}^j =  \big\{\mathcal {F}^j_{i_1 i_2 ... i_n ... } : i_k=1,2, ..., m, \; k \big\}, j=1, 2, ...,
		\end{equation}
		where  $m$  is a natural number   not  smaller than two and common for all these sets.  
		It  means that  each   element  of the  set  $ F_j$  is   labeled  through at  least  by  one  member of   $\mathcal {F}^j,$   and each   element  of  set    $\mathcal {F}^j$  presents a  member from $F_j.$       We   assume that  the   uniqueness  is not  necessarily  required  for the relation.   That  is, if 
		$\mathcal{F}^j_{i_1 i_2 ... i_n \ldots}$  and $\mathcal{F}^j_{j_1 j_2 ... jn, \ldots}$   present the same   element  $f \in \mathcal F_j,$   it  is not  necessary  that   $i_n=j_n$ for all $n =1,2,\ldots.$    Moreover,    we    determine    functions 
		$\delta_j(\mathcal{F}^j_{i_1 i_2 ... i_n \ldots}, \mathcal{F}^j_{j_1 j_2 ... jn, \ldots} ) =  d_j(f_1,f_2), j =1,2,\ldots,$  if  	$\mathcal{F}^j_{i_1 i_2 ... i_n \ldots}$  and $\mathcal{F}^j_{j_1 j_2 ... jn, \ldots}$   correspond to elements $f_1$ and $f_2$  of the set  $F_j$ respectively,   such  that  
		$\delta_j(\mathcal{F}^j_{i_1 i_2 ... i_n \ldots}, \mathcal{F}^j_{j_1 j_2 ... jn, \ldots} ) = 0$  for   different  presentations of the same point in $F_j.$ 		
		 			 
		The following  sets are  needed, 
		\begin{equation} \label{AbstFracSubSet}
		\mathcal{F}^j_{i_1 i_2 ... i_n} = \bigcup_{j_k=1,2, ..., m } \mathcal{F}^j_{i_1 i_2 ... i_n j_1 j_2 ... },
		\end{equation}
		where indices  $ i_1, i_2, ..., i_n,$  are fixed. 	It is clear that
		\[ \mathcal{F}^j \supseteq \mathcal{F}^j_{i_1} \supseteq \mathcal{F}^j_{i_1 i_2} \supseteq ... \supseteq \mathcal{F}^j_{i_1 i_2 ... i_n} \supseteq \mathcal{F}^j_{i_1 i_2 ... i_n i_{n+1}} ... , \; i_k=1, 2, ... , m, \; k=1, 2, ... \, .\]
		That is, the sets form a nested sequence.   
		
		We will  say   that  the \textit{diameter condition} is valid for  sets $ \mathcal{F}^j_{i_1 i_2 ... i_n}, j=1,\ldots,$  if   
		\begin{equation} \label{Diamprop}
		\mathrm{max}_{i_1 i_2 ... i_n}\mathrm{diam}(\mathcal{F}^j_{i_1 i_2 ... i_n}) \to 0 \;\; \text{as} \;\; n \to \infty,
		\end{equation}
		where $ \mathrm{diam}(A) = \sup \{ \delta_j(\textbf{x}, \textbf{y}) : \textbf{x}, \textbf{y} \in A \} $, for a set $ A $ in $ \mathcal{F}_j.$ 		
		
		Determine  the  function $ \delta_j(A, B)= \inf \{ \delta_j(\textbf{x}, \textbf{y}) : \textbf{x}\in A, \, \textbf{y} \in B \},$   for  two nonempty bounded sets $ A $ and $ B $ in $ \mathcal{F}_j.$  The set  $ \mathcal{F}^j, j = 1,2,\ldots,$ satisfies the \textit{separation condition }of degree $ n ,$ if there exist a positive number $ \varepsilon^j_0 $ and a natural number $ n  = n(j)$  such that for arbitrary   indices  $ i_1 i_2 ... i_n $ one can find   indices $ j_1 j_2 ... j_n $ such  that
		\begin{equation} \label{C2}
		\delta_j \big( \mathcal{F}^j_{i_1 i_2 ... i_n} \, , \, \mathcal{F}^j_{j_1 j_2 ... j_n} \big) \geq \varepsilon^j_0.
		\end{equation}
		 If    the   diameter and separation conditions are valid,  then  $\mathcal {F}^j$   is   said to be  a \textit{chaotic  structure}  for   $F_j, j=1,2,\dots,$  and  a \textit{chaotic module}  for  $F.$
		 
		 We shall  say  that  the  union,  $\mathcal{F},$  of all  modules   $\mathcal{F}_j, j=1,2,\dots,$   is a \textit{modular chaotic  structure}  for the union $F$ of all  sets $F_j, j =1,2,\ldots,$   if   the diameter  condition is valid for all  $j=1,2,\ldots,$   and there  exists a  positive  separation constant   $\varepsilon_0  = \inf_{j=1,2,\ldots}  \varepsilon^j_0.$  		
			
			Let us introduce the maps $ \varphi_j : \mathcal{F}^j \to \mathcal{F}^{j+1}, j =1,2,\ldots,$ such that
		\begin{equation} \label{MapDefn}
		\varphi_j (\mathcal{F}^j_{i_1 i_2 ... i_n ... }) = \mathcal{F}^{j+1}_{i_2 i_3 ... i_n ... }.
		\end{equation}		
		In  what follows,  we refer to the  map  $\varphi$ on the  set  $\mathcal F,$  which  is specified by  equations (\ref{MapDefn}).    That  is, $\varphi(\mathcal {F}^j)= \varphi_j(\mathcal {F}^j), j=1,2,\ldots.$    Moreover,   accept  that  $\delta(\mathcal{F}^j_{i_1 i_2 ... i_n ... }, \mathcal{F}^j_{j_1 j_2 ... j_n ... })  = \delta_j (\mathcal{F}^j_{i_1 i_2 ... i_n ... }, \mathcal{F}^j_{j_1 j_2 ... j_n ... }),  j =1,2,\ldots.$  
				   
		   Considering iterations of the map, one can verify that
		   \begin{equation} \label{MapSubset}
		 (\varphi_{j+n}(\ldots(\varphi_j(\mathcal F^j_{i_1 i_2 ... i_n}))\ldots) = \varphi^n(\mathcal F^j_{i_1 i_2 ... i_n}))= \mathcal{F}^{j+n+1},
		   \end{equation}
		   for arbitrary natural number $ n $ and $ i_k=1,2, ..., m, \; k=1, 2, ... \, $. The relations (\ref{MapDefn}) and (\ref{MapSubset}) give us a reason to call $ \varphi$ an \textit{affine similarity map},   since the shifts for lower and upper indexes are common. One can  call  also the map   \textit{modular similarity map}.    Moreover,  the number $ n $  is  named the  \textit{order of similarity}.  Analogously to  the   research  in \cite{AkhmetSimilarity},  we will  call  the set    $\mathcal F$  \textit{modular  abstract self-similar set} .
		   		   
		     The set  $\mathcal F^j,$ for a fixed $j =1,2,\ldots,$ is an  \textit{abstract self-similar set} \cite{AkhmetSimilarity}.
		     If $\mathcal F^j$   does not  depend on $j,$   the  modular similarity  map $\varphi$    is known  as  the \textit{similarity  map}   \cite{AkhmetDiscreteRandom,AkhmetDomainStruct}.

		     		  It  is clear that  there  is a naturally  determined map  $\varPhi: F \to  F,$    which  	  is specified  as $\varPhi: F_j \to  F_{j+1}, j=1,2,\ldots,$   and values $\Phi(f)$  for  a fixed $f \in F_j$ constitute  the  set $\{\phi(\mathcal{F}^j_{i_1 i_2 ... i_n \ldots})\},$  with   all labels of the point $f.$   The  map  $\varPhi$   does not  satisfy  the  uniqueness condition. 
		     In what  follows, we shall describe chaotic  properties of  the  map  $\varPhi$  in terms of the  dynamics of the map  $\phi.$  
		   
		      We shall  say that  an  element  $\mathcal F^j_{i_1 i_2 ... i_n\ldots}$ is in $(k,m)-$neighborhood of a point  $\mathcal F^p_{s_1 s_2 ... s_n\ldots}, j < p,$  if  $\mathcal F^{j+k}_{i_{k+1} i_{k+2} ... i_{k+m}} = F^p_{s_1 s_2 ... s_m}.$ In other  words,  $j+k = p,  i_{k+1} = s_1,\ldots, i_{k+m}=s_m.$   	
		   			   		
		   		 Since the concepts  are based on the  Bernoulli shift \cite{Wiggins88},  known  for the symbolic dynamics,  it  is easily to  see that the following   chaotic properties are valid. 
		   			   
		   A point $ \mathcal{F}^j_{i_1 i_2 i_3 ...}, j=1,2,\ldots,$ is periodic with period $p,$ if its lower index consists of endless repetitions of a block of $ p$ terms.  The map $\varphi$   is said to  be \textit{modular-periodic}, 
		   since $\varphi^{kp}(\mathcal F^j_{i_1 i_2 ... i_n\ldots})= \mathcal{F}^{j+kp}_{i_1 i_2 ... i_n\ldots}$  for  any  natural $k,$  if the point   
		   $\mathcal{F}^j_{i_1 i_2 ... i_n  ... } \in  \mathcal{F}^j$   is  $p-$periodic.    
		   
		   Similarly,   the map  is \textit{modular-dense}  or \textit{modular-transitive}, since   there  exists  the  lower index $i_1 i_2 i_3 \ldots,$   such  that  for   any   sequence    $j_1 j_2 j_3 \ldots$  and natural $j$    and $l$  one can  find  positive  integers  $m$  and $k,  j < k,$  such  that  $\varphi^m(\mathcal F^j_{i_1 i_2 ... i_n\ldots})= \mathcal{F}^{k}_{j_1 j_2 ... j_n\ldots},$  and $i_s =j_s, s =1,\ldots,l.$  In other words, 
		   $ \mathcal F^j_{i_1 i_2 ... i_n\ldots}$  is in $(m,l)-$neighborhood of $ \mathcal{F}^{k}_{j_1 j_2 ... j_n\ldots}.$

		     Finally,    one can   say  that  the periodic points are   \textit{modular-dense} in $\mathcal F,$    since  for  any  point    $\mathcal{F}^{k}_{j_1 j_2 ... j_n\ldots}$  of the  set   and  natural numbers $l$  and $m  \le k$  one can  find  a periodic point  $\mathcal{F}^{m}_{i_1 i_2 ... i_n\ldots}$  such  that   it  is in $(k-m,l)-$   neighborhood of  $\mathcal{F}^{k}_{j_1 j_2 ... j_n\ldots}.$

     		    It   is clear   that couples $(\mathcal {F}^j, \delta_j),  j=1,2,\ldots,$  are not, in general,  metric spaces. Nevertheless,    the  map  $\varphi$   is convenient to    extend  all   attributes of  Poincar\'{e},  Li-Yorke and Devaney  chaos  \cite{AkhmetBook, Yorke,Devaney} for  the  dynamics,  since   the spaces $(F_j,d_j)$  are  metric.   We shall say  that  the map $\varphi$ is  \textit{modular  chaotic in the Devaney sense} if it  is  dense with  respect  to modular-periodic  trajectories, modular-transitive  and \textit{modular-sensitive}.  The  modular-sensitivity means,  that    there  exists a positive number $\varepsilon_0$   such  that  for  each  element $\mathcal F^j_{i_1 i_2 ... i_n\ldots}$  and arbitrary  positive $\kappa$  one can find  an element   $\mathcal F^j_{j_1 j_2 ... j_n\ldots}$   and  a natural  number  $k,$   which  satisfy $\delta(\varphi^k(\mathcal F^j_{i_1 i_2 ... i_n\ldots}), \varphi^k(\mathcal F^j_{j_1 j_2 ... j_n\ldots}) >\varepsilon_0,$  despite  $\delta(\mathcal F^j_{i_1 i_2 ... i_n\ldots}, \mathcal F^j_{j_1 j_2 ... j_n\ldots}) < \kappa.$  Finally,  the map $\varPhi$  is \textit{modular  chaotic  in the  sense of Devaney,}  if the  corresponding  map  $\varphi$ is modular  chaotic  in the same sense. 
     		    \begin{theorem} \label{Thm1} If $\mathcal F$ is a  modular  chaotic  structure, then  the dynamics of  $\varPhi$   is modular  chaotic in the sense of Devaney.
     		   \end{theorem}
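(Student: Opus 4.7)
The plan is to verify the three ingredients of modular Devaney chaos — density of modular-periodic orbits, modular-transitivity, and modular-sensitivity — for the symbolic map $\varphi$, and then invoke the convention that $\varPhi$ inherits these properties from $\varphi$. In every step, combinatorial closeness of labels (matching a sufficiently long initial block) is converted into genuine $d_j$-proximity via the diameter condition (\ref{Diamprop}), while combinatorial difference of labels is converted into genuine $d_j$-separation via the separation condition (\ref{C2}); the underlying label arguments are the standard Bernoulli-shift ones, adapted to the cross-module shift $\mathcal F^j\to\mathcal F^{j+1}$ of (\ref{MapDefn}).

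For modular-density of periodic points, I would fix a target $\mathcal F^k_{j_1 j_2\ldots}$ and $\kappa>0$, use (\ref{Diamprop}) to select $l$ with $\mathrm{diam}(\mathcal F^k_{j_1\ldots j_l})<\kappa$, and for each $m\leq k$ construct a periodic label $\mathcal F^m_{i_1 i_2\ldots}$ whose indices are an infinite repetition of a finite block of length at least $k-m+l$, arranged so that positions $k-m+1,\ldots,k-m+l$ are exactly $j_1,\ldots,j_l$. By (\ref{MapDefn}), $\varphi^{k-m}$ then sends this periodic label into the cell $\mathcal F^k_{j_1\ldots j_l}$, so its representative in $F_k$ lies within $\kappa$ of the chosen point, giving modular-density.

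For modular-transitivity, I would fix a single label $\mathcal F^1_{i_1 i_2\ldots}$ whose lower index contains every finite word on $\{1,\ldots,m\}$ as a subword (the classical enumeration construction). Given any target $\mathcal F^k_{j_1 j_2\ldots}$ and $\kappa>0$, choose $l$ by (\ref{Diamprop}) and locate $r$ with $i_{r+1}\ldots i_{r+l}=j_1\ldots j_l$; then $\varphi^r$ lands inside $\mathcal F^{1+r}_{j_1\ldots j_l}$, yielding metric $\kappa$-proximity in $F_k$ for $1+r=k$. For modular-sensitivity, let $\varepsilon_0=\inf_j\varepsilon_0^j>0$, which is positive by the assumption that $\mathcal F$ is a modular chaotic structure. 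For a given label $\mathcal F^j_{i_1 i_2\ldots}$ and any $\kappa>0$, pick $N$ with $\mathrm{diam}(\mathcal F^j_{i_1\ldots i_N})<\kappa$ via (\ref{Diamprop}), then apply (\ref{C2}) at module $j+N$ with degree $n=n(j+N)$ to produce indices $b_1\ldots b_n$ with $\delta_{j+N}(\mathcal F^{j+N}_{i_{N+1}\ldots i_{N+n}},\mathcal F^{j+N}_{b_1\ldots b_n})\geq\varepsilon_0$. A perturbed label in $\mathcal F^j$ agreeing with $i$ on positions $1,\ldots,N$ and taking values $b_1,\ldots,b_n$ on positions $N+1,\ldots,N+n$ is then within $\kappa$ of the original in $\delta_j$, while its $N$-th $\varphi$-iterate is $\varepsilon_0$-separated from $\varphi^N(\mathcal F^j_{i_1 i_2\ldots})$.

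The principal obstacle is producing a single $\varepsilon_0$ that works simultaneously across all modules, as demanded by the definition of modular-sensitivity. A module-by-module application of (\ref{C2}) only yields the $j$-dependent constants $\varepsilon_0^j$, which could in principle degenerate to zero; the hypothesis that $\mathcal F$ is a modular chaotic structure is precisely what rules this out by providing $\inf_j \varepsilon_0^j>0$. A minor auxiliary difficulty is the non-uniqueness of labels, which is handled by the convention that $\delta_j$ vanishes between different labels of the same point in $F_j$, so that all the approximation and separation statements descend intact from $\mathcal F$ to $F$ and transfer the chaotic attributes of $\varphi$ to $\varPhi$.
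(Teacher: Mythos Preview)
Your proposal is correct and follows essentially the same route as the paper. The paper dispatches modular-transitivity and modular-density of periodic points in one line as holding ``by definition'' --- in the paper these two notions are purely combinatorial (phrased through $(k,m)$-neighborhoods and index matching, not through $\delta_j$-closeness), so the standard Bernoulli-shift constructions you spell out suffice without even invoking the diameter condition; your metric translation via (\ref{Diamprop}) is thus a harmless extra step. For modular-sensitivity the paper's argument and yours coincide: pick $N$ large by the diameter condition to get $\kappa$-closeness, then apply the separation condition in the target module to produce an $\varepsilon_0$-separation after $N$ iterates, with the uniform $\varepsilon_0=\inf_j\varepsilon_0^j>0$ supplied by the modular-chaotic-structure hypothesis.
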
      
     		     \begin{proof}  	Modular-transitivity   and density  for modular-periodicity   are  valid, since of the  definitions.		   	
		   	For modular-sensitivity, fix a point $ \mathcal{F}^j_{i_1 i_2 ... } \in \mathcal{F} $ and an arbitrary positive number $ \kappa $. Due to the diameter   and separation conditions, there exist an integer $ k $ and element $ \mathcal{F}^j_{i_1 i_2 ... i_k j_{k+1} j_{k+2} ...}$ such that $ 0 < \delta(\mathcal{F}^j_{i_1 i_2 ... i_k i_{k+1} ...}, \mathcal{F}^j_{i_1 i_2 ... i_k j_{k+1} j_{k+2} ...}) < \kappa $ and  $\delta(\mathcal{F}^j_{i_{k+1} i_{k+2} ... i_{k+n}}, \mathcal{F}^j_{j_{k+1} j_{k+2} ... j_{k+n}}) > \varepsilon_0 .$  This proves the sensitivity.	
		   \end{proof}		   
		   In \cite{AkhmetUnpredictable,AkhmetPoincare}, Poisson stable motion is utilized to distinguish  chaotic behavior from  periodic motions in Devaney and Li-Yorke types.  The dynamics  is named  Poincar\`{e} chaos.    Let  us provide the definition adapted  for  the  dynamics of  the  present  research.   A point  $ \mathcal{F}^j_{i_1 i_2 ... } \in \mathcal{F}, j=1,2,\ldots, $  is said to  be the  \textit{modular unpredictable point}    for the  map  $\varphi$,  if there    exist a positive number $\varepsilon_0$  and  two    unbounded sequences of natural  numbers,  $\kappa_n$  and $\zeta_n, n =1,2,\ldots,$  such  that  for  arbitrary    natural number   $l$  one can  find sufficiently  large  $\kappa_n$  such  that 
		   $\varphi^{\kappa_n}( \mathcal{F}^j_{i_1 i_2 ... }) =   \mathcal{F}^{j+\kappa_n}_{j_1 j_2 ... },$  where  $j_s = i_s,  s =1,\ldots,l,$  and  
		   $j_{\kappa_n+\zeta_n} \not = i_{\kappa_n+\zeta_n} .$ 		   
		   
		     Following the definition  of Poincar\'{e} chaos \cite{AkhmetPoincare} we shall  say  that  the   dynamics of the map $\varphi$ is \textit{modular-chaotic in the sense of Poincar\'{e}} if there  is  a modular-unpredictable point  for each  $j=1,2,\ldots.$     Moreover,   we call  the unpredictable point  itself  an  unpredictable orbit  of the dynamics.  It  is natural to  accept  that the  map  $\varPhi$  is  \textit{modular  chaotic  in the  sense of Poincar\'{e},}  if the  corresponding  map  $\varphi$ is modular  chaotic  in the   sense of Poincar\'{e}. 
		    \begin{theorem} \label{Thm2}  If $\mathcal F$ is a   modular  chaotic  structure, then the map  $\varPhi$  possesses modular chaos in the sense of   Poincar\`{e}.
		   \end{theorem}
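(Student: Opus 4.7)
The plan is to exhibit, for each $j = 1, 2, \ldots$, an explicit modular unpredictable point for $\varphi$ by constructing its sequence of lower indices by hand; the corresponding statement for $\varPhi$ then follows directly from its definition. Because the metric $\delta$ is inherited from the chaotic structure and $\varphi$ acts on labels by the simultaneous shift $j \mapsto j+1$ and $(i_1,i_2,\ldots) \mapsto (i_2,i_3,\ldots)$, the whole argument can be reduced to a combinatorial construction on sequences in $\{1,\ldots,m\}^{\mathbb{N}}$ and then transferred to a point of $\mathcal F^j$.

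First I would fix an enumeration $w_1, w_2, \ldots$ of all finite words over $\{1,\ldots,m\}$ and define the target sequence $\omega = i_1 i_2 i_3 \ldots$ inductively as a concatenation of finite blocks $B_1 B_2 B_3 \ldots$. Block $B_n$ would be chosen so that (i) it begins with a copy of the prefix of $\omega$ already committed at previous stages, so that initial blocks $i_1 \ldots i_{l_n}$ of growing length $l_n \to \infty$ reappear as subwords starting at positions $\kappa_n \to \infty$; and (ii) it ends with at least one symbol chosen in $\{1,\ldots,m\}$ so as to break any would-be period, which is always possible because $m \ge 2$. One may additionally require that $w_n$ appear inside $B_n$; this costs nothing and gives transitivity of $\omega$ for free.

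Next I would take $\mathcal F^j_\omega$ as the candidate unpredictable point and let $\varepsilon_0$ be the modular separation constant. For arbitrary $l$, property (i) supplies arbitrarily large $\kappa_n$ with $i_{\kappa_n + s} = i_s$ for $s = 1, \ldots, l$, which is exactly the prefix clause $j_s = i_s$, $s \le l$, in the definition of a modular unpredictable point. Property (ii) forces $\omega$ to be non-periodic, so for each such $\kappa_n$ there is a position $\zeta_n$—which can be chosen to be the smallest index exceeding $l_n$ at which the desired inequality holds—with $i_{2\kappa_n + \zeta_n} \ne i_{\kappa_n + \zeta_n}$, i.e.\ $j_{\kappa_n + \zeta_n} \ne i_{\kappa_n + \zeta_n}$, and both sequences $\kappa_n, \zeta_n$ are unbounded. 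The separation condition then upgrades this combinatorial disagreement to a quantitative distance at least $\varepsilon_0$ in the relevant module, yielding the $\varepsilon_0$ that appears in the definition.

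The step I expect to require the most care is reconciling the two competing demands on $\omega$: arbitrarily long prefixes must recur under the shift, yet enough aperiodicity must survive for the differing-position clause to be fulfilled at each $\kappa_n$. The block-and-pad construction outlined above handles both requirements simultaneously, and once the symbolic object is in place, the passage from $\varphi$ to $\varPhi$ is immediate because by definition $\varPhi$ is modular Poincar\'e chaotic as soon as $\varphi$ is.
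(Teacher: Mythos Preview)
Your proposal is correct and follows essentially the same route as the paper: reduce the problem to the lower-index symbolic dynamics and exhibit an unpredictable sequence there, then transfer to $\varPhi$ via its definition. The paper's own proof is a single sentence citing Lemma~3.1 of \cite{AkhmetPoincare} (the construction of an unpredictable point for the Bernoulli shift on $\Sigma_m$) and invoking it for the modular similarity map; your block-and-pad construction is precisely an explicit execution of that lemma, so the only difference is that you unfold the citation rather than appeal to it.
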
		   
		   The proof of the last theorem is based on the verification of Lemma 3.1 in \cite{AkhmetPoincare},  applied  to the modular similarity map.
		   
		   In addition to  modular Devaney and Poincar\`{e} chaos,  the chaos of the Li-Yorke  type  \cite{Yorke} can  be defined.   Additionally  to  the  modular-periodic points  of the dynamics,  assume that  there  exists  an uncountable \textit{scrambled set} set  of  non-periodic points in   $ \mathcal{F}^j, j =1,2,\ldots,$   such  that 
		   		   $$(\alpha) \lim\sup_{k \to \infty}\delta( \varphi^k(\mathcal{F}^j_{i_1 i_2 ... }), \varphi^k(\mathcal{F}^j_{i_j j_2 ... } ) )  >0,$$  
		   		     and 
		   		   $$(\beta) \lim\inf_{k \to \infty}\delta( \varphi^k(\mathcal{F}^j_{i_1 i_2 ... }), \varphi^k(\mathcal{F}^j_{i_j j_2 ... } ) ) =0,$$
		   		   for each  couple  in  the set,  if   $\delta( \mathcal{F}^j_{i_1 i_2 ... },   \mathcal{F}^j_{i_j j_2 ... } ) \not  = 0.$   And, finally,  assume that    for each    point  $     \mathcal{F}^j_{i_1 i_2 ... }$  of the  set  and a periodic  point  $\mathcal{F}^j_{i_j j_2 ... }$  it  is true that   $\lim\sup_{k \to \infty}\delta( \varphi^k(\mathcal{F}^j_{i_1 i_2 ... }), \varphi^k(\mathcal{F}^j_{i_j j_2 ... } ) )  >0.$   The  conditions  $(\alpha)$  and $(\beta)$  are called 
		   		   \textit{modular-proximality}  and \textit{modular-frequent  separation} properties  for \textit{the modular  Li-Yorke chaos},  respectively.

		 	We will  say   that for the sets $ \mathcal{F}^j_{i_1 i_2 ... i_n}, j=1,\ldots,$ the \textit{strong  diameter condition} is valid,  if 
		 \begin{equation} \label{Diamprop}
		 \mathrm{sup}_{j} \mathrm{max}_{i_1 i_2 ... i_n}\mathrm{diam}(\mathcal{F}^j_{i_1 i_2 ... i_n}) \to 0 \;\; \text{as} \;\; n \to \infty.
		 \end{equation}		 
		  	 We shall  say  that  the  union,  $\mathcal{F},$  of all  modules   $\mathcal{F}_j, j=1,2,\dots,$   is a \textit{strong modular chaotic  structure}  for the union $F$ of all  sets $F_j, j =1,2,\ldots,$   if   the strong diameter  condition is valid for all  $j=1,2,\ldots,$   and there  exists a  positive  separation constant   $\varepsilon_0  = \inf_{j=1,2,\ldots}  \varepsilon^j_0.$    We shall  call  the  map  $\varPhi$   \textit{modular  chaotic  in the  sense of Li-Yorke,}  if the  corresponding  map  $\varphi$ is modular  chaotic  in the sense of Li-Yorke. 
		     \begin{theorem} \label{Thm3}  If $\mathcal F$ is a   strong modular  chaotic  structure, then the map  $\varPhi$   is modular chaotic  in the  Li-Yorke sense.
		  \end{theorem}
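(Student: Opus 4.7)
The plan is to construct an uncountable scrambled set of non-periodic points directly from symbolic sequences, and then transfer the classical Li--Yorke argument to the modular setting by exploiting the fact that, under the strong hypothesis, both the diameter bound and the separation constant $\varepsilon_0$ are uniform in the module index.

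\emph{Symbolic construction.} Fix a starting index $j$ and build an uncountable family $\{s^{(t)}=i^{(t)}_1 i^{(t)}_2\cdots : t\in T\}$ over the alphabet $\{1,\dots,m\}$ with three properties: (i) no $s^{(t)}$ is eventually periodic, so the associated point $\mathcal{F}^j_{s^{(t)}}$ is not modular-periodic; (ii) for every $t\neq t'$ and every $N$, there are arbitrarily large $k$ such that $i^{(t)}_{k+1}\cdots i^{(t)}_{k+N}=i^{(t')}_{k+1}\cdots i^{(t')}_{k+N}$; (iii) for every $t\neq t'$, there are arbitrarily large $\ell$ after which the sequences can be continued on a block of length $n(j+\ell)$ so as to realize the separation condition (\ref{C2}). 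Such a family is produced by the standard block construction: parametrize $T\subset(0,1)$ by binary expansions and interleave ever-longer agreement and disagreement blocks, exactly as in the classical construction of a scrambled set for the shift.

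\emph{Verification of $(\beta)$.} For each pair $t\neq t'$, property (ii) gives a subsequence $k_\nu\to\infty$ with agreement on a block of length $N_\nu\to\infty$ starting at position $k_\nu+1$. Then both $\varphi^{k_\nu}(\mathcal{F}^j_{s^{(t)}})$ and $\varphi^{k_\nu}(\mathcal{F}^j_{s^{(t')}})$ lie in the common cylinder $\mathcal{F}^{j+k_\nu}_{i^{(t)}_{k_\nu+1}\cdots i^{(t)}_{k_\nu+N_\nu}}$, so their distance is bounded by its diameter. The strong diameter condition provides a bound depending only on $N_\nu$, not on the module index $j+k_\nu$, so the distance tends to zero, proving $\liminf=0$.

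\emph{Verification of $(\alpha)$ and separation from periodic points.} Property (iii), together with the separation condition at module $j+\ell_\nu$ and the uniform bound $\inf_p\varepsilon^p_0=\varepsilon_0>0$, places $\varphi^{\ell_\nu}(\mathcal{F}^j_{s^{(t)}})$ and $\varphi^{\ell_\nu}(\mathcal{F}^j_{s^{(t')}})$ in cylinders separated by at least $\varepsilon_0$, which gives $\limsup\ge\varepsilon_0>0$. Taking $s^{(t')}$ to be an arbitrary periodic sequence in the same argument yields the required separation of each scrambled point from every periodic orbit.

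\emph{Main obstacle.} The novelty with respect to Theorem \ref{Thm1} is that the iterates of $\varphi$ migrate across modules, so both the shrinking-diameter estimate and the separation estimate must be uniform in the module index. This is precisely where the strengthened hypothesis is used: the strong diameter condition supplies uniformity for $(\beta)$, while $\varepsilon_0=\inf_j\varepsilon^j_0>0$ supplies it for $(\alpha)$. The most delicate combinatorial point is the simultaneous accommodation, in the design of the sequences $s^{(t)}$, of condition (iii) with a separation degree $n(j+\ell)$ that varies with $\ell$; this reduces to a nested block construction once the uniform bounds above are in hand.
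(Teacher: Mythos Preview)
Your proposal is correct and follows precisely the route the paper intends: the paper's own proof consists of a single sentence referring to Theorem~6.35 in Chen--Huang for the shift map on symbolic sequences, and your outline is exactly the adaptation of that argument to the modular setting, with the added (and accurate) observation that the strong diameter condition and the uniform bound $\varepsilon_0=\inf_j\varepsilon_0^j>0$ are what make the proximality and separation estimates survive the migration across modules. You have in fact supplied more detail than the paper does, including the correct identification of the one nontrivial combinatorial wrinkle, namely that the separation degree $n(j+\ell)$ may vary with the module.
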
		      
		      The proof of the  assertion is similar to that of Theorem 6.35 in \cite{Chen} for the shift map defined in  the space of symbolic sequences.
		   
		   One can  easily  see  that,  in the  case when $(F_j,d_j)$ does not  depend on $j,$ we obtain   domain-structured chaos,  which  was introduced  in \cite{AkhmetDiscreteRandom}.  That is,   modular chaos can be considered as generalization of  domain-structured chaos.     The  modular chaos relates to domain structured chaos  similarly  as non-autonomous dynamics does   to  autonomous one. 
		   
		   The  analysis  of   ingredients for  the  three types of chaos  implies   that   the most weak,  and, consequently,  most  flexible properties are  required   for  Poincar\'{e} modular   chaos,   since there  distance evaluations   are  not  request. Nevertheless,  we are confident  that   this must  not    be considered as a  deficiency.   The description still  can  provide circumstances, sufficient  for  irregularity,   which  can  be useful   for   future   applications. 
		   \section{Chaotic random processes}
	 Let  us describe the  type of the  stochastic  processes \cite{Doob} that    are   in the focus  of this  paper.   Consider a continuous  time or discrete time  random    process  with continuous or discrete state space  as a family of random variables $\textbf X(t)$   indexed by  the parameter  $t$  with  the range  $\mathcal I,$  which  is either an interval  of   the real line  or  an   infinite  set  of integers bounded from below.   Realizations of the dynamics are  not  necessarily  continuous  if the time is continuous  and  they  must  not  be  bounded functions for both sorts of time. Denote  by 
	$S = \bigcup_{t\in \mathcal I} S^t$ the  state  space of the process,  and  consider it  with  a distance $d.$  We assume positive probability  for all  members   of the state  space,  and for  each  fix  $t$   one of the  members of the state space must   necessarily happen.     

	Suppose that    for   each   fixed $t \in \mathcal I$  there exists   a   presentation, 
		\begin{equation} \label{DomainStrSpace}
	\mathcal{S}^t =  \big\{\mathcal{S}^t_{i_1 i_2 ... i_n ... } : i_k=1,2, ..., m, \; k=1, 2, ... \big\},
	\end{equation}	     		     	
	where    $m$  is a fixed natural  number, common for all $t \in \mathcal I,$  larger  or  equal  than  two,   such  that  each   element of the  set  $S^t$ is presented  as a   member   of the  set  $\mathcal S^t.$  The  presentation is in the sense of the last section, 	 and corresponding  to  the  distance $d$ function $\delta$  is common for  all $t \in \mathcal I.$
	
	Consider all  possible   infinite strictly  increasing   sequences $t_{\alpha} = \{t_i^{\alpha}\} ,  i = 1,2,\ldots,$  in $\mathcal I,$  with  $\mathcal A,$   the  set  of  all  indexes  $\alpha.$   
	
	We shall say  that  the  random process $\textbf X(t)$  is \textit{(strongly) modular chaotic},   if $\mathcal S^{\alpha} = \bigcup_i \mathcal S^{t_i^{\alpha} }$  is a  (strong)  modular chaotic structure  for $S^{\alpha} = \bigcup_i S^{t_i^{\alpha} }$    with  each  $\alpha \in \mathcal A,$    and   there  exists  a number $\varepsilon_0,$  which  is larger than all  separation constants $\varepsilon^{\alpha}_0,  \alpha \in \mathcal A.$  	
		
				 The   choice of modular  chaos for the random dynamics description  is  approved,  since  the  set  of all  realizations   for  dynamics  $\textbf X(t_{\alpha})$  with  fixed $\alpha \in \mathcal A$  coincides with  the  set  of all  trajectories of  the  affine  similarity  map $\varphi$  on the set    $\mathcal S^{\alpha},$  and  the  dynamics of the  affine  map  is similar to  the   symbolic dynamics.   To argument this,  let us turn the  discussion to  the lower indexes  analysis.  Consider the space $\displaystyle \Sigma_m=\big\{(i_1i_2\ldots) \ |  i_k =1,2,\ldots,m \big\}$ of infinite sequences on  finite number of symbols with the metric $$d(s,t)=\displaystyle \sum_{k=1}^{\infty} \frac{\left|i_k-j_k\right|}{2^{k-1}},$$ where $(i_1i_2\ldots),$ and  $(j_1j_2\ldots)$   are  elements of  $ \Sigma_m.$ The Bernoulli shift $\sigma: \Sigma_m\to \Sigma_m$ is defined as $\sigma(s_0s_1s_2\ldots)=(s_1s_2s_3\ldots).$ The map $\sigma$ is continuous and $\Sigma_r$ is a compact metric space \cite{Wiggins88,Devaney}. According to the  result  in \cite{AkhmetPoincare}, the   symbolic dynamics  admits  an unpredictable 􏰁  point, $i^*,$   a sequence  from the  set \cite{AkhmetUnpredictable}.   It  is important for  us that  $\Sigma_m$  is a closure  for  the unpredictable orbit.   The peculiarity  of the  metric   implies   that    each   arc of any  sequence in  the space coincides with some arc of the  unpredictable  sequence with   a   shift  precision.   In other words,   since of finite  number of indexes  involved in the  construction,  one  have the  complete coincidence  with   a part  of the unpredictable orbit. Moreover,   the  coincidence  does not  last  forever,   since of the unpredictability.    This is why,   each  iteration of the  Bernoulli shift  $\sigma$  results  what  is done by  the Bernoulli  trial and,  similarly, by  $\textbf X(t_{\alpha}).$  This  is why,   we  accept that  the  random dynamics  is modular  chaotic.     
				 
				 It  is clear  that  for  the discrete  time random process, it is sufficient to  consider  only   the sequence related to  the  set  of indexes $\mathcal I$  itself.  We do  not   exclude  that the  method  of modular  chaos  may  be of use even  if the  modular  chaotic structure  exists not    for  all  $\alpha$    from $\mathcal A,$  but  for some of them. 
				 
				 On the  basis of the  last  discussion one  can  say that  stochastic processes  are  more "saturated with  irregularity",  than  deterministic chaos.   Nevertheless,  we have the strong confidence that   through  the  developed  results   one  can  make the  difference  between  the  two  concepts  less,  if not  diminish  at  all. 
				 Next, we will  provide three  examples of random processes,  which  are  modular  chaotic.
		           \begin{example}
		           Consider  several scalar real-valued  functions $f_j(t), j = 1,\ldots,m,$ defined on a  time  interval, $\mathcal I,$  of real  numbers.  Assume that  the  ranges of the functions are  disjoint  and   minimal  distance between  elements of  distinct  ranges is larger  than a positive  number $\varepsilon_0.$   Define the  random process $\textbf X(t)$   such  that    for  each  fixed $t \in \mathcal I,$  it  admits  value of one of the  functions $f_j$  with positive probability  $p_j,  j =1,\ldots,m.$  The probabilities do not  depend on $t,$  and  the  sum  of  all  probabilities  is  equal  to the unit   if   $t$  is fixed.   
		           
		           We have that   the  state space is $S = \bigcup_{t \in \mathcal I} S^t,$ where  $S^t = \{f_j(t)| j=1,\ldots,m \}.$   Let  us construct  the presentation  
		           \begin{equation} \label{DomainStrSpace}
		           \mathcal{S}^t =  \big\{\mathcal{S}^t_{i_1 i_2 ... i_n ... } : i_k=1,2, ..., m, \; k=1, 2, ... \big\},
		           \end{equation}
		           where   $\mathcal{S}^t_{i_1 i_2 ... i_n ... } = f_{i_1}(t), i_j = 1,2,\ldots,m,\; j =2,\ldots.$  One can   check  that  the  union, $\mathcal S,$  of sets  $\mathcal S^t, t  \in \mathcal I,$  is a modular chaotic  structure  for  the set  $S,$  and   consequently  the  	random process is modular  chaotic. 
	           \end{example}  
           \begin{figure}[ht]
           \centering
           \includegraphics[height=7.2cm]{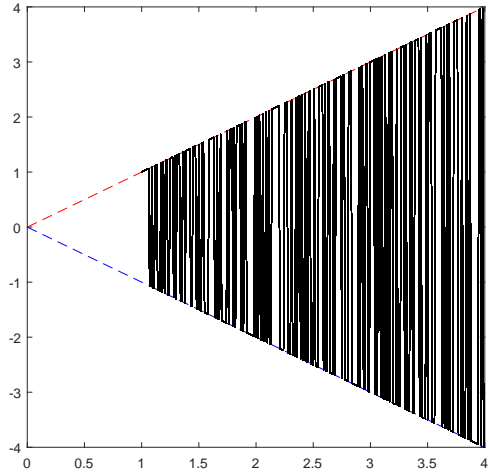}
           \caption{The graph of the function $\varphi$  in interval $[1,4]$. The red dashed line is the graph of the identity function $t$ and the blue dashed line is the graph of function $-t$.}
           \label{p1-2}
       \end{figure}
   \begin{example}  (numerical simulation)  	
   Let  us  consider   the  random process  	$\textbf  X(t),$     which  is equal  to  $t$  or $-t$    for  each  fixed $t  \in [1,4],$  with  probability  $1/2.$  It  is clear  that  the dynamics is  a continuous-time random process with  continuous   state  space.  Fix the parameter value $t$  and obtain that  $S^t = \{t,-t\}$ consists  of two  elements,  and the distance between  the   two  points  is equal  $|t - (-t)| = 2t  \ge \varepsilon_0 = 2.$  Let  us,  construct   a chaotic structure  for  the  set  $S^t.$   Denote $S^t_{i_1i_2\ldots} =t $  with  $i_1 = 1, $  and   $i_j$ is equal  to   $1$  or $2$  for  all $j  \ge 2.$  Similarly,         $S^t_{i_1i_2\ldots} = - t$  with $ i_1 = 2$  and $i_j$  is equal  to $1$  or $2$  for  all $j  \ge 2.$    According  to   discussion  in the last example the random process   is modular  chaotic. To illustrate  the dynamics,
   we consider the  sequence $t_i = i/100,  i = 100,2,\ldots,400,$    randomly   determine values    $\textbf  X(t_i)$    and   draw the  graph of the   piece-wise constant function  $\varphi$ equal to  $\textbf  X(t_i)$ on the  interval   $\big [i/100, (i+1)/100\big),  i =100,\ldots,399.$  The graph is seen  in Figure \ref{p1-2} and it  approximates one of realizations of the random dynamics.    	
\end{example}  
\begin{figure}[ht]
\centering
\includegraphics[height=7.2cm]{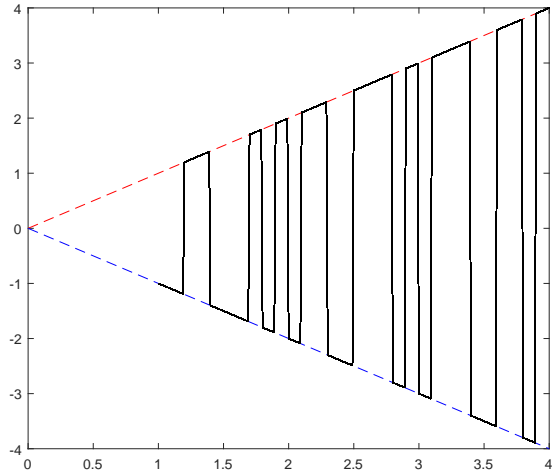}
\caption{The graph of the function $\psi$  in interval $[1,4]$. The red dashed line is the graph of the identity  function $t$ and the blue dashed line is the graph of function $-t$.}
\label{p3}
\end{figure}
\begin{example}  (discrete  time  dynamics)  Let  us 		determine   functions $f_i(t)=t$  and $g_i(t)=-t$  on the  interval  $\big [i/10, (i+1)/10\big)$  for  each $i=10,11,\ldots.$     Construct  the metric space  of continuous functions, $S^i = \{f_i(t),g_i(t)\}, t \in  \big [i/10, (i+1)/10\big),$   for  each  $i=10,11,\ldots,$  with  the  distance $d(f_i(t),g_i(t))= \max_{t} |t - (-t)|.$   Introduce the discrete  time random process $\textbf X(i),  i=10,11,\ldots,$    which   is equal  to  the  function $f_i(t)$ or $g_i(t)$  with  probability $1/2$  on the interval  $\big [i/10, (i+1)/10\big).$  One can   easily  see that  the    random dynamics is   modular  chaotic,   since one can construct  appropriate   modular   chaotic  structure.  The graph  of  a    realization, $\psi,$   on the  interval $[1,4]$ is seen  in Figure \ref{p3}.
\end{example}

		   	\section{Conclusion} 
		   		We  have pursued   three   goals  in    the  present   research. The first  one  is to reveal  deterministic chaos  signs   in  random phenomena,   and the second,   to maximize the presence.   New   definitions of chaos to  fulfill  the  two tasks   have been  developed.  All  the  three are  considered in   this  research   for  random processes with discrete and continuous time. 		   	
		   	   	The application of the  modular  chaos concept   to   random dynamics  analysis  is  sensible, since  the  set  of all  realizations   of    a random  dynamics   coincides with  the  set  of all  trajectories of the chaos.  	To approve the phenomenon,   the domain structured chaos notion, which  has been considered in previous our  papers,  is   extended.  		   	   	
		   	The  new type of   chaos   has been  developed   such  that  the  notions of transitivity and density  of periodic  orbits  are free of metrical or topological features, and are  based on the comprehension that  two  points are  close, if they  belong to  the  same set by indexing.    Moreover,  trajectories  move among different sets, when  the  time increases,    and  each  of the  sets  admits its own chaotic structure. It  is important   that  the concept    helped us to  find  a chaotic structure  for  continuous-time random processes, bounded  as well as  unbounded.  
		   	We accept  the phenomenon  as  a sign of  deterministic chaos in the dynamics,  since  realizations of the corresponding  discrete  time random process   are the same as trajectories of  the  deterministic  modular  similarity  map introduced in the present  research. 
		   	The  concept  can  be   considered for  processes,  which  are stochastic only  partially, in  time,  that  is hybrid dynamics, through combinations of intervals,  where  a dynamics is deterministic  or  stochastic.   	   		   	
		   	  Hopefully,  this  all  may generates new  approaches inside of the stochastic dynamics theory \cite{Oksendal}  as well  as   for non-stationary processes  and  for  Markov chains.   Our research  results  may  help  for development of chaos in non-equilibrium  processes \cite{Prigogine}.
		   	The methods of    chaotic dynamics, e.g. synchronization and control   \cite{Gonzales}, can be extended to dynamics with probability.   Moreover, one can expect  that results  for symbolic dynamics 
		   	\cite{Williams} considered in the course of the  present  research  can give new  effects  for entropy \cite{Zmeskal,Adler},  harmonic analysis, discrete mathematics, probability, and operator algebras \cite{Jorgensen}.

	   	\end{document}